\documentclass{amsart}
%[a4paper,10pt]
\usepackage{pdfsync}
\usepackage{xspace}
\usepackage[all]{xy}
\usepackage{enumerate}
\usepackage{amssymb}
\usepackage{amsmath}

%For CZECH:
%\usepackage[T1]{fontenc}
%\usepackage[czech]{babel}

%This defines \cal. Don't know why it's not there otherwise. I've laways used it...
\makeatletter
\DeclareRobustCommand*\cal{\@fontswitch\relax\mathcal}
\makeatother

\newtheorem{definition}{Definition}%[section]

\newtheorem{thm}{Theorem}

\newtheorem{fact}[definition]{Fact}

%Orthogonal paper commands
%	\newcommand{\qd}{{\rm qd}}
%	\newcommand{\Lsep}{{L^{I-sep}}}

\newcommand{\ann}{\mathop{{\rm ann}}\nolimits}

\newcommand{\mtx}{\mathfrak}

%my addition, Summer 2006
%\newcommand{\PE}{{\rm PE}}
%\newcommand{\pe}{\ov}

%WORDS AND PHRASES

%\newcommand{\gqv}{{Serre-axiomatizable class\ }}
%\newcommand{\gqvs}{{Serre-axiomatizable class}}
%\newcommand{\ax}{{Serre-axiomatizable\ }}
%\newcommand{\axs}{{Serre-axiomatizable}}
%\newcommand{\Bezout}{B\'{e}zout\xspace}

%CATEGORIES

\newcommand{\RMod}{R\textrm{-Mod}}

\newcommand{\Ab}{\mathop{\rm Ab}}

\newcommand{\Hom}{\mathop{\rm Hom}}

\newcommand{\ppf}{{\rm ppf}}
\newcommand{\D}{{\rm D}}
\newcommand{\Lg}{{\rm L}}

%\newcommand{\fmod}{\models_f}
%\newcommand{\xmod}{\models_{{\cal X}f}}

%SYMBOLS
\newcommand{\eq}{\,\dot=\,}
\newcommand{\di}{\,| \,}

\newcommand{\tr}{{\rm t}}

\newcommand{\ov}{\overline}

\newcommand{\br}{\ov}

%ARROWS AND BRACKETS

%	\newcommand{\lt}{\left}
%	\newcommand{\rt}{\right}
%	\newcommand{\xr}{\xrightarrow}

%CALIGRAPHIC
%\newcommand{\cal}{\mathcal}

\newcommand{\cB}{\cal B}
\newcommand{\cC}{\cal C}
\newcommand{\cF}{{\cal F}}

%\newcommand{\K}{\cal K}

%BBB

%\newcommand{\Ha}{\mathbb{H}}

\newcommand{\T}{\mathop{\rm T}}

%GREEK

\newcommand{\eps}{\varepsilon}

\renewcommand{\phi}{\varphi}

%FRAK
%\newcommand{\mtx}{\mathfrak}

%ALEPHS and other CARDINALS

%\renewcommand{\br}{\overline}
%\renewcommand{\br}{\ov}

%\newcommand{\bpm}{\begin{pmatrix}}
%\newcommand{\epm}{\end{pmatrix}}
%\newcommand{\bsm}{\begin{smallmatrix}}
%\newcommand{\esm}{\end{smallmatrix}}

%Zaehler

\newcounter{numeq}
\newcounter{num}
\newcounter{one}
\newcounter{two}
\newcounter{three}
\newcounter{four}
\newcounter{five}
\newcounter{six}
\newcounter{seven}
\newcounter{eight}
\newcounter{nine}
\newcounter{ten}
\newcounter{eleven}
\newcounter{twelve}
\newcounter{thirteen}
\newcounter{fourteen}
\newcounter{fivteen}

\setcounter{one}{1}
\setcounter{two}{2}
\setcounter{three}{3}
\setcounter{four}{4}
\setcounter{five}{5}
\setcounter{six}{6}
\setcounter{seven}{7}
\setcounter{eight}{8}
\setcounter{nine}{9}
\setcounter{ten}{10}
\setcounter{eleven}{11}
\setcounter{twelve}{12}
\setcounter{thirteen}{13}
\setcounter{fourteen}{14}
\setcounter{fivteen}{15}

\newcommand{\thenumm}{{\rm (\alph{num})}}
\newcommand{\thenummeq}{{\rm (\roman{numeq})}}

{\begin{list}{\thenumm}{\usecounter{num}}}%
{\end{list}}

{\begin{list}{\thenumm}{\usecounter{num}
\leftmargin0cm\itemsep0ex\parsep0ex\labelwidth0pt
\itemindent0.5em
\topsep0ex plus0ex minus0ex\parskip0ex}}%
{\end{list}}

{\begin{list}{\thenummeq}{\usecounter{numeq}\setlength{\leftmargin}{2em}
}}%
{\end{list}}

{\begin{list}{\thenummeq}{\usecounter{numeq}
\leftmargin1.1cm\itemindent0cm\itemsep0ex\parsep0ex
\topsep0ex plus0ex minus0ex\parskip0ex}}%
{\end{list}}

%-------------------------------------------------------------------------------------------------

%[section]

%------------------------------

\begin{document}
\title{Implications of positive formulas in modules}
\author{Philipp Rothmaler}
\date{March 26, 2019}
\maketitle
{\centering\footnotesize To Miyuki\par}

\section{Introduction: implications} 
As is common in logic, given formulas $\phi$ and $\psi$, we write  $\phi\to\psi$ to mean the sentence $\forall \br x\, (\phi\to\psi)$, where $\bar x$ is a tuple containing all free variables of the formulas $\phi$ and $\psi$. We almost always suppress the universal quantifiers when writing implications.

We adopt the (not unusual) convention that disjunctions $\bigvee$ and conjunctions $\bigwedge$  bind more strongly than $\to$.

The most prominent kind of implication may be that of a basic Horn formula, that is an
implication $\chi$ of the form $\bigwedge\Phi\to \psi$, where $\Phi=\Phi(\br x)$ is a (possibly infinite) set of atomic formulas (in the same finitely many variables $\br x$) and $\psi=\psi(\br x)$ is a single atomic formula or $\perp$ (falsum).  According to \cite{H}, such formulas were shown by McKinsey in 1943 to be \emph{preserved in direct products}. (It was Horn in 1951, however, who proved the result in greater generality, whence the name.) That is, given a tuple $\br a$ of matching length in a direct product $A = \prod_I A_i$, if, for all $i\in I$, the coordinate tuple $\br a(i)$ of $\br a$ in $A_i$ satisfies $\chi$, then $\br a$ satisfies $\chi$ in $A$. The straightforward proof of this, cf.\ \cite[Thm.9.1.5]{H}, shows more.  

\begin{fact}\label{Horn}
If $\Phi$ is a set of formulas preserved  in direct factors and $\psi$ is a formula preserved in direct products, then the implication  $\bigwedge\Phi\to \psi$ is preserved in direct products.
\end{fact}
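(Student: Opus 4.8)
The plan is to run the standard semantic argument behind McKinsey's preservation lemma --- \emph{push the antecedent down to the factors, apply the implication in each factor, pull the conclusion back up} --- now using only the relaxed hypotheses. So let $A=\prod_{i\in I}A_i$ be a direct product and let $\bar a$ be a tuple in $A$ of matching length such that, for every $i\in I$, the coordinate tuple $\bar a(i)$ satisfies $\bigwedge\Phi\to\psi$ in $A_i$; I must show that $\bar a$ satisfies $\bigwedge\Phi\to\psi$ in $A$. To that end I would assume that the antecedent holds in $A$, \ie $A\models\phi(\bar a)$ for every $\phi\in\Phi$, and set out to derive $A\models\psi(\bar a)$.

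First I would push the antecedent down to the factors. Fixing $j\in I$ and regrouping the product as $A\cong A_j\times\prod_{i\neq j}A_i$ exhibits $A_j$ as a direct factor of $A$, under which isomorphism $\bar a$ projects to $\bar a(j)$. Since each $\phi\in\Phi$ holds of $\bar a$ in $A$ and is preserved in direct factors, it holds of $\bar a(j)$ in $A_j$; as $j\in I$ and $\phi\in\Phi$ were arbitrary, $A_i\models\bigwedge\Phi(\bar a(i))$ for every $i\in I$.

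Then I would apply the implication in each factor and pull the conclusion back up. For each $i\in I$ the coordinate tuple $\bar a(i)$ satisfies $\bigwedge\Phi\to\psi$ in $A_i$ by hypothesis, and by the previous step it satisfies the antecedent there, so $A_i\models\psi(\bar a(i))$. Since $\psi$ is preserved in direct products, it then holds of the tuple $\bigl(\bar a(i)\bigr)_{i\in I}=\bar a$ in $A=\prod_{i\in I}A_i$; that is, $A\models\psi(\bar a)$, as desired. The whole argument is a short diagram chase and I expect no real obstacle; the one step worth spelling out is the passage to the factors, where one must note that each factor $A_j$ of a (possibly infinite) direct product genuinely is a direct factor --- witnessed by $A\cong A_j\times\prod_{i\neq j}A_i$ --- so that the hypothesis ``$\Phi$ preserved in direct factors'' is legitimately available.
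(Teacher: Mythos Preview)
Your argument is correct and is precisely the standard semantic chase the paper alludes to (it does not spell out a proof but cites \cite[Thm.9.1.5]{H} and calls it ``straightforward''). The only cosmetic remark is that one need not explicitly regroup the product to invoke preservation in direct factors---the definition in the paper already gives $A\models\phi(\bar a)\Rightarrow A_i\models\phi(\bar a(i))$ directly---but this changes nothing.
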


Here we say that $\Phi$ is \emph{preserved  in direct factors} if, conversely, the truth of $\chi(\br a)$ in $A$ entails that of $\chi(\br a(i))$ in $A_i$ for every $i\in I$ (and every possible direct product).

Recall that a \emph{primitive} formula is  an existentially quantified conjunction of atomic and negated atomic formulas. Thus a \emph{positive primitive} (henceforth \emph{pp})  formula is an existentially quantified conjunction of atomic  formulas. Clearly, pp formulas are preserved by homomorphisms. 

As is easily seen, pp formulas have both of the features figuring in the hypothesis of Fact \ref{Horn}, cf.\ \cite[Lemma 9.1.4]{H}.  Therefore  an implication of the form
$$\bigwedge_{\phi\in\Phi} \phi \to \psi,$$
with all $\phi$ and $\psi$ pp (and in the same variables again), is preserved in direct products. 
 
 Following  \cite{PRZ1}, we call such implications \emph{A-formulas}---for reasons that become clear when we look at  \emph{a}bsolutely pure modules below. (In fact, in that paper only \emph{A-sentences} were considered, by which we mean A-formulas  with all free variables universally quantified out.) 
 
 \begin{fact}\label{A} A-sentences are preserved in direct products.
 \end{fact}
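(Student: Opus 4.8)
The plan is to read off Fact~\ref{A} from Fact~\ref{Horn}. An A-sentence is (the universal closure of) an implication $\bigwedge_{\phi\in\Phi}\phi\to\psi$ in which $\psi$ and every member $\phi$ of $\Phi$ is a pp formula in the same variables $\br x$. Thus it suffices to verify the two hypotheses of Fact~\ref{Horn}: that each pp formula in $\Phi$ is preserved in direct factors, and that the pp formula $\psi$ is preserved in direct products. Granting these, Fact~\ref{Horn} yields that $\bigwedge_{\phi\in\Phi}\phi\to\psi$ is preserved in direct products, which is precisely the assertion of Fact~\ref{A}.

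For preservation in direct factors I would invoke the fact, already noted in the text, that pp formulas are preserved by homomorphisms: a pp formula is built from atomic formulas using $\wedge$ and $\exists$, and a homomorphism carries a tuple witnessing the existential quantifiers to a witnessing tuple while preserving the truth of equations. Since each coordinate projection $\pi_i\colon A=\prod_{i\in I}A_i\to A_i$ is a homomorphism, $A\models\phi(\br a)$ forces $A_i\models\phi(\br a(i))$ for every $i$, as required. For preservation of $\psi$ in direct products, write $\psi(\br x)=\exists\br y\,\theta(\br x,\br y)$ with $\theta$ a conjunction of atomic formulas; if $A_i\models\psi(\br a(i))$ for every $i$, choose a witness $\br c_i$ in $A_i$ with $A_i\models\theta(\br a(i),\br c_i)$, let $\br c$ be the tuple with $\br c(i)=\br c_i$, and note that $A\models\theta(\br a,\br c)$, since an equation between terms holds in a product exactly when it holds in every coordinate; hence $A\models\psi(\br a)$. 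Both statements are contained in \cite[Lemma~9.1.4]{H}.

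With these inputs Fact~\ref{Horn} applies; spelling out the resulting argument: given a matching tuple $\br a$ in $A=\prod_{i\in I}A_i$ with $A\models\phi(\br a)$ for all $\phi\in\Phi$, preservation in direct factors gives $A_i\models\bigwedge_{\phi\in\Phi}\phi(\br a(i))$ for each $i$; since the A-sentence holds in every $A_i$, we obtain $A_i\models\psi(\br a(i))$ for each $i$; and preservation of $\psi$ in direct products yields $A\models\psi(\br a)$. As $\br a$ was arbitrary, the A-sentence holds in $A$. I do not expect a genuine obstacle: the only points meriting a word are that $\Phi$ may be infinite --- harmless, since preservation in direct factors is applied one conjunct at a time --- and the degenerate case $I=\emptyset$, where $A$ is the one-element module in which every pp formula, hence every A-sentence, holds.
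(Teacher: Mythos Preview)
Your proof is correct and follows exactly the paper's approach: the paper simply notes that pp formulas enjoy both preservation properties required by Fact~\ref{Horn} (citing \cite[Lemma~9.1.4]{H}) and concludes Fact~\ref{A} from that. You have supplied the details the paper leaves implicit, but the route is the same.
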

Actually,  \cite{PRZ1} dealt only with modules (over an arbitrary associative ring with $1$), which is also the context of choice here. As atomic formulas are simply linear equations in this context, positive primitive formulas are existentially quantified finite systems of (homogeneous!) linear equations. In a module pp formulas thus define projections of solution sets of finite systems of linear equations (which always form additive subgroups). See \cite{P}  for more detail (or \cite{P2}, where they are called \emph{pp conditions}).

The simplest, and in some sense most prominent, examples of pp formula are $rx\eq 0$ and $\exists y (x\eq ry)$. The latter  we naturally write as $r\,|\,x$.

%Implications of positive formulas constitute the first step outside the positive language---in disjunctive form they involve just one negation. 

Simple examples like $nx\,\dot=\,0\to x\,\dot=\,0$  show that implications of pp formulas need not be preserved in epimorphic images and are therefore, by Lyndon's Theorem, in general not (equivalent to a) positive (formula). But they still behave decently enough. 

On a very general level this can be observed in positive logic as treated by Ben-Yaakov and Poizat \cite{BP}, where implications of finitary positive (but not necessarily primitive) formulas play an important role in their development of model theory---for instance, in the proof of the compactness theorem---for the simple reason that such formulas, they call them \emph{h-inductive}, are preserved in direct limits, as can easily be verified. This can be used to prove the compactness theorem for sets of h-inductive sentences in a rather straightforward way \cite[Cor.4]{BP}. (Moreover, they prove a preservation theorem saying, conversely,  that the theories preserved in direct limits are precisely those that can be axiomatized by h-inductive sentences \cite[Thm.23]{BP}.)

If now both antecedent and consequent in an h-inductive sentence are  (positive) \emph{primitive}, the implication is preserved in \emph{pure} epimorphic images (cf.\ \S\ref{purity}), which indicates how close such pp implications get to positive. 

 While also this makes sense for any similarity type of algebraic structures, \cite{purity}, it is modules where we know of most fruitful applications of this general model-theoretic viewpoint. And so, in this article we survey applications of positive implications (by which we really mean implications of positive formulas) in module categories. For reasons explained in \cite[Thm.4.1]{PRZ1}, all the implications we deal with are of the form  $\bigwedge\Phi\to \bigvee\Psi$, where both $\Phi$ and $\Psi$ are allowed to be infinite sets of pp formulas. Some features turn out nicer, cf.\ Fact \ref{Serre} below, if we `prepare' $\Psi$ to be closed under finite sums, so that $\bigvee\Psi$ becomes $\sum\Psi$ and the implication turns into $$\bigwedge\Phi\to \sum\Psi.$$ Following  \cite[Thm.4.1]{PRZ1}, we call such implications \emph{symmetric} sentences.

  We start, in \S\ref{alg}, with a list of such implications, some classical, some rather new, and go on, in \S\ref{log}, to exhibit consequences of the corresponding syntactic shapes.

%???Slogan: in the pure context, pp implications behave like positive formulas (behave positively???). 
%
%In this article we survey applications of positive implications in module categories. We start with a list of such, some classical, some rather new, and go on to exhibit consequences of the corresponding syntactic shapes.
%
% While much of the framework makes sense in other algebraic contexts, it is these additive and abelian categories where we know of most fruitful applications of this general model-theoretic viewpoint???. 
%
This survey is an extended version of a talk on `Incomplete Theories' given at the model theory conference at RIMS, December 10--12, 2018. I wish to thank the organizer, Professor Hirotaka Kikyo, and RIMS for their hospitality and for providing the opportunity to present part of this material there.%\footnote{for RIMS, started typing 12/27/18.}

\section{Preliminaries} 
%\subsection{Implications} As is common in logic, given formulas $\phi$ and $\psi$, we write  $\phi\to\psi$ to mean the sentence $\forall \br x\, (\phi\to\psi)$, where $\bar x$ is a tuple containing all free variables of the formulas $\phi$ and $\psi$. We almost always suppress the universal quantifiers when writing implications.
%
%To avoid `Klammersalat', we adopt the (not unusual) convention that $\bigvee$ and $\bigwedge$ (big???) bind more strongly than $\to$.

\subsection{Tuples and matrices} Tuples are finite sequences that we think of as column or row vectors---depending on convenience. 
%To be precise, consider P.M.\ Cohn's matrix notation: given  natural numbers $m$ and $n$, the set of $m\x n$ matrices with entries from a set $S$ is denoted by $^mS^n$. Here $1$'s are omitted so that $S^n$ is the set of row vectors over $S$ of legth $n$, while $^mS$ is the set of column vectors over $S$ of length $m$. Further, we use $S^{<\omega}$ for the union $\bigcup_{0<n<\omega} {S^n}$ and $^{<\omega}S$ for the union $\bigcup_{0<n<\omega} {^nS}$.
For instance, given two tuples $\br s$ and $\br t$ of same length, say $k$, we use $\br s\,\br t$ to denote the formal linear combination $\sum_{i<k} s_it_i$. I.e., we think of  matrix multiplication of the appropriate vectors. 
%In other words, we think of $\br s$ as a member of $S^k$ and $\br t$ as one of $^kT$ (and assume that there's a product defined between $S$ and $T$ and a sum between those products). This will always be clear from the context and it will always be assumed that the tuples are of the same length (but will never be mentioned again). 
In other words, we think of $\br s$ as a row vector (in say $S$) and $\br t$ as column vector (in say $T$, assuming that there is a product defined between $S$ and $T$ and a sum between those products). This will always be clear from the context and it will always be assumed that the tuples are of the same length (but will never be mentioned again).

Most of the times, we have that $S$ is the ring $R$ and $T$ a left $R$-module. So typically $\br s$ would then be a row vector, while a tuple in a left module is usually thought of as a column vector.

%Correspondingly, we can also multiply any tuple $\br t$ in $T$ by a scalar $r\in R$---namely, thinking of $\br t$ as a row vector from $T$. This makes $r\,\br t$ a tuple (more precisely, a row vector) in $T$. If $T$ is a right $R$-module, thinking of $\br t$ as a column vector, we may consider the product $\br t\,r$, which then is a column vector in $T$. Again, whichever we have in mind will be completely clear from the context.

\subsection{Modules and annihilators} Unless indicated otherwise, \emph{module} means left module over an associate ring $R$ with $1$. 
The \emph{annihilator} of $X\subseteq R$ in a module $M$ is the set $\ann_MX$ of all elements of $M$ that get annihilated by all elements of $X$. It is customary to write ${\mtx r}(r)$
instead of $\ann_{_RR}X$, also known as the \emph{right annihilator} of $X$ in $R$. (Then $\mtx l(X)$ would denote  the \emph{left annihilator}.)
 
 A \emph{domain} is a ring with no (nontrivial) zero divisors, i.e., a ring in which every nonzero element has trivial left and right annihilators (i.e., the annihilators are $0$ whether considered in $_RR$ or in $R_R$.)
%$\br x$ or $\bar x$, that's the question????

%Introduce $\br r\,\br y$

\subsection{Languages and formulas} Some precaution is in order with the term \emph{first-order}, especially when infinitary languages are at play. The original sense of the term is that the variables can stand only for elements, and hence quantifications can only be over elements. (\emph{Second order} languages also have variables for sets of elements, and so on.) So whether infinitary or not, all expressions in $\Lg_{\infty \infty}$ are first-order. I will use the old term \emph{elementary} to single out the \emph{finitary} first-order part, i.e., $\Lg_{\omega \omega}$.   (For some reason, over the course of the last half century, model theorists have replaced `elementary' by `first order'---misleadingly so, as Tommy Kucera has pointed out to me.) The term \emph{pp formula} is reserved for existentially quantified \emph{finite} conjunctions of atomic formulas, which \emph{are} elementary.

The pp formulas of a given arity over a ring $R$---rather their classes modulo equivalence in \emph{all} $R$-modules---form a lattice with largest element $\br x \eq\br x$ and smallest element $\br x\eq \br 0$, where meet is conjunction and join is ordinary sum (of subgroups). To see that a conjunction of pp formulas is pp, simply pull out the quantifiers. 

For the sum one has to do some more rewriting. First of all, note that, given two pp formulas of same arity $\phi$ and $\psi$, in every module $M$, the sum $\phi(M) + \psi(M)$ can be defined by $\exists \br y \br z (\br x\eq \br y + \br z \, \wedge \phi(\br y) \wedge  \psi(\br z))$. Again, one can pull out all existential quantifiers to make it look pp.

\subsection{Purity}\label{purity} If the ring is a field, hence the modules are vector spaces, all embeddings split, i.e., every subspace is a direct summand. In general, direct summands may be  rare. But the concept of pure submodule plays an intermediate role in general module categories that salvages some of the convenient features of direct summands while being numerous enough.

For model theorists the easiest definition of purity (and one that applies to any algebraic structure) is via pp formulas: loosely speaking, an embedding of structures is \emph{pure} if it is an elementary embedding with only pp formulas under consideration. A \emph{pure substructure} is a substructure whose identical inclusion is a pure embedding. More precisely, a structure $A$ sitting inside a structure $B$  is a \emph{pure substructure} if $A\cap \phi(B) = \phi(A)$ for every pp formula $\phi$. (Pp formulas being existential, the inclusion from right to left is, of course, always true.) For modules, it suffices to consider unary pp formulas, see \cite[Prop.2.1.6]{P2}. It is easy to see upon projection, that direct summands are pure submodules (but the converse is far from true: e.g., every elementary substructure is pure). 

A morphism $g: B \to C$ is said to be a \emph{pure epimorphism}, \cite[\S1.6]{habil},  if for every pp formula $\phi$, every tuple in $\phi(C)$ has a preimage in $\phi(B)$. Again, for modules $1$-place $\phi$ suffice.  (Considering the trivial formula $x\eq x$, one sees that such a map is indeed surjective, thus justifying the term.)

Another convenient feature of modules is that in a short exact sequence $0\to A \to B\to C \to 0$, the monomorphism is pure  if and only if the epimorphism is,  \cite[Cor.3.5]{habil}. In this case we speak of a \emph{pure-exact sequence}.

Lazard proved that pure-exact sequences are precisely the direct limits of split exact sequences, which once again points to their importance (cf.\ \cite[Prop.2.1.4]{P2} and the references given there).

\subsection{Elementary (Prest--Herzog) duality}\label{D} Mike Prest \cite{P} found an an antiisomorphism $\D$, called \emph{elementary duality},  between the lattice of pp formulas (of a given arity) for left modules and the lattice of pp formulas (of same arity) for right modules. Let's look at it in arity $1$. Then $\D$ sends $r|x$ to $xs\eq 0$ and $sx\eq 0$ to $s|x$ (on the other side, i.e., to $\exists y(x\eq ys)$). In particular, $x\eq x$ gets sent to $x\eq 0$ and vice versa. Also, meet goes to join and join goes to meet, so that $\D(\phi\wedge\psi) = \D\phi + \D\psi$ and $\D(\phi + \psi) = \D\phi \wedge \D\psi$.
For the entire definition, see \cite{P2}, \cite{Her}, or \cite{habil}.

Ivo Herzog \cite{Her} extended elementary duality to theories (which is why it is also called \emph{Prest-Herzog duality}). This makes it an even more powerful tool. In particular, the dual of an implication $\phi\to\psi$ is defined to be  $\D\psi\to\D\phi$.

Finally, \cite{PRZ1} extended all this to certain infinitary formulas. It suffices to know here that the dual of a symmetric sentence $\bigwedge\Phi\to \sum\Psi$ is defined to be $\bigwedge\D\Psi\to \sum\D\Phi,$ which is symmetric again. (Here by the dual of a set I mean the set of the duals.)

An extremely useful application of elementary duality is 

\begin{fact}[Herzog's criterion]\label{crit}

$\br a \otimes \br b = 0$ in a tensor product $A \otimes_R B$ if and only if  there is a pp formula $\phi$ (of matching arity) such that $\br a \in \D\phi(A)$ and $\br b \in \phi(B)$.
\end{fact}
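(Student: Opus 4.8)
The plan is to connect the vanishing of $\br a\otimes\br b:=\sum_i a_i\otimes b_i$ in $A\otimes_R B$ (so $A$ a right and $B$ a left $R$-module, and $\br a,\br b$ of the common length $n$) with the matrix descriptions of pp formulas and of $\D$. Since only the finitely many entries of $\br a$ and $\br b$ occur and pp formulas are finitary, everything takes place over finite data. Recall first that every pp formula $\phi(\br x)$ for left modules, $\br x$ of length $n$, is equivalent to one in the normal form
\[
\phi(\br x)\;\equiv\;\exists\br y\,\bigl(\br x\eq H\br y\;\wedge\;G\br y\eq 0\bigr),
\]
with $H,G$ matrices over $R$ of suitable sizes (move $\br x$ into the existential block); and, unwinding Prest's matrix definition of $\D$ (see \cite{P2}, \cite{Her}) for $\phi$ in this shape,
\[
\D\phi(\br x)\;\equiv\;\exists\br z\,\bigl(\br xH\eq\br zG\bigr),
\]
a pp formula for right modules — for $\br x$ a single variable this is exactly $\D(r\di x)=(xr\eq 0)$ and $\D(rx\eq 0)=(r\di x)$ of \S\ref{D}. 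Thus $\br b\in\phi(B)$ amounts to a tuple $\br b'$ over $B$ with $\br b=H\br b'$ and $G\br b'=0$, and $\br a\in\D\phi(A)$ to a tuple $\br a'$ over $A$ with $\br aH=\br a'G$.

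For the implication from right to left, given such $\br a',\br b'$, compute in $A\otimes_R B$ using bi-additivity and the balancing relation $ar\otimes c=a\otimes rc$:
\[
\br a\otimes\br b\;=\;\sum_{i,j}a_iH_{ij}\otimes b'_j\;=\;\sum_j(\br aH)_j\otimes b'_j\;=\;\sum_j(\br a'G)_j\otimes b'_j\;=\;\sum_l a'_l\otimes(G\br b')_l\;=\;0,
\]
the third equality being $\br aH=\br a'G$ and the last $G\br b'=0$. So this direction is a routine calculation with the defining equations of $\phi$ and $\D\phi$.

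For the converse I would take a free presentation $R^{(L)}\xrightarrow{\beta}R^{(K)}\xrightarrow{\alpha}B\to 0$, so that $A\otimes_R B=\mathrm{coker}\bigl(1_A\otimes\beta:A^{(L)}\to A^{(K)}\bigr)$ by right exactness of $A\otimes_R(-)$. Writing $b_i=\alpha\bigl(\sum_k c_{ik}e_k\bigr)$ in terms of the basis $(e_k)$ and $\beta(e'_l)=\sum_k\beta_{kl}e_k$, the element $\br a\otimes\br b$ corresponds to the class of $\bigl(\sum_i a_ic_{ik}\bigr)_k\in A^{(K)}$; its vanishing says that this tuple lies in the image of $1_A\otimes\beta$, i.e.\ there are finitely many $a'_l\in A$ with $\sum_i a_ic_{ik}=\sum_l a'_l\beta_{kl}$ for every $k$. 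Restricting to the finitely many indices actually involved, put $y_k:=\alpha(e_k)$, let $H:=(c_{ik})$ and $G:=(\beta_{kl})^{\tr}$ (so that $\alpha\beta=0$ reads $G\br y=0$ for $\br y=(y_k)$), and set $\phi(\br x):=\exists\br z\,(\br x\eq H\br z\wedge G\br z\eq 0)$. Then $\phi$ is pp, $\br b\in\phi(B)$ via the witness $\br y$, and by the formula for $\D$ above $\D\phi(\br x)\equiv\exists\br z(\br xH\eq\br zG)$ holds of $\br a$, witnessed by $\br a'$, since $\br aH=\br a'G$ is exactly the vanishing condition just obtained. This is nothing but the classical equational criterion for the vanishing of a tensor, read off in the language of pp formulas.

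The heart of the matter is this converse — producing a pp formula from the bare fact that a tensor is zero — but no step is deep once one has the cokernel (equivalently, right-exactness) description of $A\otimes_R B$. The one thing demanding genuine care is the matrix bookkeeping: getting transposes, the choice of existentially bound variables, and rows-versus-columns consistent, so that the normal form of $\phi$, the matrix formula for $\D\phi$, and the balancing relation in $A\otimes_R B$ all line up. With those conventions pinned down, both directions are short.
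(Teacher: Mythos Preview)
The paper does not actually prove this statement: it is listed as a \emph{Fact} (Herzog's criterion) and attributed to the literature, with no argument given. So there is nothing in the paper to compare against beyond noting that your approach is the standard one.

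Your argument is correct. The easy direction is the routine balancing calculation you wrote down, and this is exactly the direction the paper invokes later (in \S\ref{abs}, for the inclusion $\phi(M)\subseteq\ann_M\D\phi(R_R)$). For the converse you are using the classical equational criterion for an element of $A\otimes_R B$ to vanish---via right-exactness of $A\otimes_R(-)$ applied to a free presentation of $B$---and then simply reading the resulting finite system of equalities as ``$\br b\in\phi(B)$ and $\br a\in\D\phi(A)$'' for the pp formula $\phi$ built from the finitely many coefficients and relations involved. This is precisely how Herzog's original proof goes. Your caveats about bookkeeping (row vs.\ column, the transpose in $G$, restricting to the finite support of $\br a'$ and of the $c_{ik}$) are well placed; once those conventions are fixed your matrix formula for $\D\phi$ matches the one in \cite{P2} and \cite{Her}, as your sanity checks $\D(r\di x)=xr\eq 0$ and $\D(rx\eq 0)=r\di x$ confirm.
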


Here we adopt the same convention about tuples: by $\br a \otimes \br b$ we mean the linear combination of simple tensors, $\sum_{i} a_i\otimes b_i$.

%\begin{fact}[Herzog's criterion] \cite[3.2]{Her}
%
%Suppose $N$ is a right $R$-module and $M$ is a left $R$-module. ???? Do we need it????
%In a tensor product 
% \end{fact}

%We call its inverse also $\D$, so that $\D^2$ is the identity.???? where 

\section{Algebra}\label{alg}
\subsection{Classical torsion} Just as for abelian groups, a left module over a domain $R$ is called \emph{torsionfree} if it satisfies all implications $rx\eq0\to x\eq0$, where $r$ runs over the nonzero ring elements. 
So the axioms are a set of implications of atomic formulas. As the consequent is always the same, one can arrange the entire set into a single---possibly infinitary---implication of positive formulas, namely, $$(\bigvee_{0\not= r\in R}rx\eq0)\to x\eq0.$$ Dually, a left $R$-module is called \emph{torsion} (or \emph{periodic} in group theory) if it satisfies the implication $$x\eq x\to(\bigvee_{0\not= r\in R}rx\eq0).$$
%The class of all such modules forms what is known

In the consequent of this implication one can replace disjunction by sum, for, over a commutative domain,   $(rx\eq 0 + sx\eq 0) \to (rs)x\eq  0$ (which shows that these axioms are F-sentences in the sense of \S\ref{F}). %(\bigvee_{0\not= r\in R}rx\eq0)\to x\eq0$ is in fact $x\eq x \to \sum_{0\not= r\in R}rx\eq0)$. nonsense, would lead to divisibility.

We observe a curious asymmetry: while torsionfreeness can be axiomatized by a (possibly infinite) set of elementary sentences, namely pp implications, being torsion is not in general an elementary property---take, for example, the (torsion) Pr\"ufer group $\Bbb Z_{p^\infty}$, which is elementarily equivalent to $\Bbb Z_{p^\infty}\oplus \Bbb Q$, which is mixed.

\subsection{Torsion theory} %???make all one t???
This  is done in torsion theory as follows, cf.\ \cite{St}. One first defines a map $\T$ from $\RMod$, the category of left $R$-modules to the category of sets  by $\T(M)=\{m\in M\,:\,rm\,=\,0 \text{ for some } 0\not=r\in R\}$, which is easily seen to be a functor. As the so-called \emph{torsion part} $\T(M)$ forms a subgroup of the underlying additive group of $M$, it is in fact a functor from $\RMod$ to the category $\Ab$ of abelian groups. Now, it is not hard to see that $\T(M)$ is even a submodule, making the map $\T$ what is called a \emph{preradical}. 

Given any such preradical, $\tr$, one says that $M$ is \emph{torsionfree} (with respect to $\tr$) if $\tr(M) = 0$, and \emph{torsion} if  if $\tr(M) = M$.  If the preradical $\tr$ happens to be definable, these classes are axiomatized by the implications
$$\tr(x) \to x\eq 0,$$ 
respectively,
$$x\eq x \to \tr(x).$$

The preradical $\T$ enjoys the extra property that $\T(M/\T(M))=0$, which makes it a \emph{radical}. This is the same as to say that the factor module of a module modulo its torsion part is torsionfree.

The important feature of this radical for us is that it is definable by an infinite disjunction, $\bigvee_{0\not= r\in R}rx\eq0$, so that both extreme properties of torsion and torsionfree become axiomatized by implications involving that disjunction. We will encounter more such definable radicals.

That the map $\T$ is a radical depends heavily on properties of the ring. The proof that it is is easy when $R$ is a commutative domain. Ore introduced conditions that make sure the same works even when the domain is no longer commutative, cf.\ \cite{St}. However, there are domains that are not Ore. %check???

Another issue is to extend this to non-domains. The problem being that the ring---as a, say left, module over itself (denoted $_RR$)---is no longer $\T$-torsionfree if it has zero divisors.  Namely, any left zero divisor is a member of $\T(_RR)$. One does not like that. The remedy is to let $r$ run  only over so-called \emph{regular} elements (which means non-zero divisors, on either side). If one now puts the Ore conditions on those, one gets the same effect for that adjusted radical. %Check??? 

A \emph{torsion theory} in the technical sense is then the pair of the two classes of torsionfree modules and of torsion modules. Every such pair gives rise to a preradical, whose value on a module $M$ is defined to be the maximal torsion submodule of $M$, see \cite[Ch.VI]{St}.

%\subsection{Classical torsionfree modules}
\subsection{Hattori torsion}\label{Hatt} An elegant solution to the aforementioned problems with zero divisors was found by Hattori \cite{Hat}. He defined a module $F$ to be \emph{torsionfree}, we will say \emph{$\mtx h$-torsionfree}  (where $\mtx h$ stands for Hattori),\footnote{In the literature, these are often simply called \emph{torsionfree}, cf.\ e.g.\ \cite[\S 2.3.2]{P2}.} if an element in $F$ can be annihilated by a ring element $r\in R$ only if it is a linear combination of elements in which every coefficient itself is annihilated by $r$. This can be expressed by infinitary implications as follows.

    Given a tuple $\br s$ in $R$, let $\br s\, |\,x$ denote the pp formula $\exists\, \br y\, (x\eq \br s\,\br y)$ (remember, with this notation the tuples are assumed to be of matching length).
 
 For the purposes of the next implication, given $r\in R$, write    
   $\mtx v(r)$ for the set  $\mtx r(r)^{<\omega}$ of all (finite) row vectors  $\br s$ with entries from $\mtx r(r)$. In other words,  $\mtx v(r)$ is the set of all row vectors from $R$ with $r\,\br s\,=\,\br 0$.
    
%    right annihilator of $r$ in $R^{<\omega}$, i.e., the set $\in {R^{<\omega}}$   Further, denote by $h_r=h_r(x)$  the (possibly infinite) disjunction $\bigvee_{\br s\in \mtx v(r) \br s\,|\,x$.  

Then the above statement about annihilation and linear combinations  can be expressed by the (possibly infinitary) implication $$r\,x\eq 0\,\,\,\to \bigvee_{\br s\in \mtx v(r)} \br s\,|\,x.$$ Therefore $\mtx h$-torsionfreeness is axiomatized by all such implications where $r$ runs over all (sic!) of $R$.

Again, torsion theory (as an algebraic theory) has a standard way of obtaining a torsion theory (in the technical sense) from a notion of torsionfreeness. Define $T$ to be \emph{$\mtx h$-torsion} if $\Hom(T, F)=0$ for every $\mtx h$-torsionfree module $F$. Let $\mtx h$ be the functor from $\RMod$ to $\Ab$ that singles out, in any $M\in\RMod$, the largest $\mtx h$-torsion submodule $\mtx h(M)$. Hattori \cite{Hat} shows %??? 
that this is always a preradical and gives conditions when it is a radical, see also \cite[\S 5]{Tf} for a  discussion.

Note that $\mtx h$-torsionfreeness becomes classical torsionfreeness when the ring is a domain (commutative or not).
%\subsection{Hattori-torsionfree modules}
\subsection{Flat modules} Note that the disjunction $\bigvee_{\br s\in \mtx v(r)} \br s\,|\,x$ in Hattori torsion defines in $_RR$ exactly the right annihilator of $r$. To make things more visible, let $\phi_r$ stand for  the pp formula $r\,x\eq 0$ defining it. 

As  mentioned, in $_RR$, the disjunction $\bigvee_{\br s\in \mtx v(r)} \br s\,|\,x$ simply defines $\phi_r(_RR)$. But in an arbitrary module $M$ it defines 
$ \phi_r(_RR) M$, that is, the additive subgroup of $M$ generated by all products $r\,m$ with $r\in \phi_r(_RR)$ and $m\in M$, hence exactly the group of all linear combinations in $M$ with coefficients in $\phi_r(_RR)$. In other words, $M$ satisfies the implication $r\,x\eq 0\,\,\,\to \bigvee_{\br s\in \mtx v(r)} \br s\,|\,x$ precisely when $\phi_r(M)\subseteq \phi_r(_RR) M$. 

So $M$ is $\mtx h$-torsionfree if and only if it satisfies these implications for all $r\in R$, if and only if $\phi_r(M)\subseteq \phi_r(_RR) M$ for all $r\in R$. % which, in turn, can be axiomatized by those implications  $\phi_r(x) \to h_r(x)$, where the disjunction $h_r$ captures all the linear combinations in question. 
(Note, the reverse implication is always true, in any module.)

Requiring this inclusion $$\phi(F)\subseteq \phi(_RR) F$$
 for every pp formula $\phi$ one obtains the notion of \emph{flat} module $F$---by a result  of Zimmermann \cite{Zim}, see \cite{P} and  \cite[Thm.2.3.9]{P2}; but for the purposes at hand, we may adopt this  as the definition. (And again, it is easily seen that the inclusion from right to left is always true.)
 
 From this one sees at once that every flat module is $\mtx h$-torsionfree. Over a commutative principal ideal domain, in particular, for abelian groups, the converse is true. More generally, the same holds for RD-rings, see \cite[2.4.16]{P2}.

To produce an axiomatization of flatness by infinitary implications, first 
generalize the previous notation to an arbitrary pp formula $\phi$ by writing    
   $\mtx v(\phi)$ for the set  $\mtx \phi(_RR)^{<\omega}$ of all (finite) row vectors  $\br s$ with entries from $\phi(_RR)$ and consider the
 infinite disjunction $\bigvee_{\br s\in \mtx v(\phi)} \br s\,|\,x,$ which indeed defines 
 $\phi(_RR) M$ in every module $M$. 

It is now clear that a module is flat if and only if it satisfies 
  all the implications 
$$\phi\to\bigvee_{\br s\in \mtx v(\phi)} \br s\,|\,x,$$
where $\phi$ runs over all pp formulas (and again, $1$-place $\phi$ turn out to suffice).

The union  the disjunction  defines in a module  is exactly the sum of the subgroups defined by the disjuncts (for torsionfreeness, simply concatenate the tuples $\br s$, but this is as clear for arbitrary $\phi$, since, adding two disjuncts, one simply gets longer linear combinations of the same kind). Thus we finally obtain an axiomatization of flatness by the symmetric sentences 
$$\phi\to\sum_{\br s\in \mtx v(\phi)} \br s\,|\,x$$
for every (unary) pp formula $\phi$.

\subsection{Purely generated modules}\label{puregen}
Here I discuss an axiomatization result from \cite{HR}, which generalizes that for flat modules and for which some more  background is needed.

It can be easily understood that in the ring $R$, as a left module over itself, every element $r$ not only satisfies the pp formula 
$r|x$, but that this is the smallest pp formula it satisfies, i.e., for every unary pp formula $\phi$ with $r\in \phi(_RR)$, one has $r|x\leq\phi$. We say, $r|x$ \emph{generates} the pp type of $r$ in $_RR$. Clearly, this passes on to direct powers of  $_RR$, that is, to \emph{free} modules, except one needs to allow divisibility formulas $\br r|x$ with tuples as divisors, as before. And finally it passes down to direct summmands of free modules, which is to say, to \emph{projective} modules. This can be done for tuples as well, and one obtains that pp types in projective modules are generated by divisibility formulas, \cite[Lemma 1.2.29]{P2}. 

More can be said. If $\br a$ is a tuple in a projective module $P$ whose pp type is generated by such a pp formula $\phi$, then this tuple actually satisfies this formula freely: we say $(P, \br a)$  is a \emph{free realization} of $\phi$ if $\br a$ satisfies (or realizes) $\phi$ in $P$ and, whenever a tuple $\br b$ in a module $M$ satiesfies $\phi$ as well, then there is a homomorphism $f: P \to M$ sending $\br a$ to $\br b$. It is an easy exercise to show, this implies that $\phi$ generates the pp type of $\br a$ in $P$.

Modules in which every tuple freely realizes some divisibility formula are called \emph{locally projective}. If we allow arbitrary pp formulas as generators, we obtain the concept of \emph{locally pure projective} or \emph{strict Mittag-Leffler} module, see \cite{sML} for a detailed discussion.

We say a class $\cC$ is \emph{purely generated} by a class $\cB$ if every member of $\cC$ is a pure-epimorphic image of a direct sum of modules from $\cB$. This concept plays an important role. For instance, by a theorem of Lenzing, the flat modules are purely generated by finitely generated projectives, cf.\ discussion in \cite{HR} before Thm.\ 2.1.

Suppose $\cB$ is a class of locally pure projective modules closed under finite direct sum, and $\cC$ is the class purely generated by $\cB$. It was shown in \cite[Thm.2.1]{HR} that then $\cC$ is axiomatized by implications of the form
$$\phi \to \sum \ppf_{\cB}\phi,$$
where $\ppf_{\cB}\phi$ is the (usually infinite) set of pp formulas below $\phi$ (in the lattice order) that are freely realized in some member of $\cB$. 

In \cite{sML}, more such axiomatizations can be found. (Note, a proper definition of $\ppf_{\cB}\phi$ is missing in \cite{HR}, but it was alluded to at the end of \cite[Rem.4.2]{HR}.)

\subsection{Injective torsion} Returning to torsion, consider injective torsion as introduced in \cite[Def.\,2.1]{MR}, whose radical  $\mtx s$ is defined by letting the torsion part $\mtx s(M)$ of $M\in\RMod$ be the kernel of the map $\eps\otimes_R 1_M$, where $\eps$ denotes an injective envelope $R_R \to E$ of $R_R$. Applying elementary duality (\S \ref{D}), especially Herzog's criterion Fact \ref{crit},  one can quite easily show, \cite{MaRo}, that $\mtx s$ is definable by a (possibly infinite) disjunction of pp formulas. More precisely, $\mtx s(M)$ is the union of all pp subgroups $\psi(M)$ for which $\psi(_RR)=0$,  \cite{MaRo}. This shows that the \emph{$\mtx s$-torsion} modules are axiomatized by the implication 
$$x\eq x\to \mtx s(x),$$
 while the \emph{$\mtx s$-torsionfree} modules are axiomatized by the implication 
 $$\mtx s(x)\to x\eq 0.$$ 
 Here $\mtx s(x)$ denotes the aforementioned disjunction of formulas. While this may involve an infinite disjunction in the antecedent, as with classical torsion the entire  implication is equivalent to the set of pp implications
 $$\psi(x) \to x\eq 0,$$
 where $\psi$ runs over the pp formulas that vanish in $_RR$. Thus the class of $\mtx s$-torsionfree modules is axiomatized by pp implications (and thus elementary). More will be said in \S \ref{defsub}.

\subsection{Positive primitive (pp) torsion} In direct generalization of the behavior of injective torsion, one may consider the following generalization (introduced in \cite{Tf}). Given any class $\cF$ of left $R$-modules, let $\mtx s_\cF(x)$ be the disjunction of all pp formulas that vanish on all members of $\cF$. Denote by $\mtx s_\cF$ the map from $\RMod$ to sets that singles out in $M\in\RMod$ the subset defined by the disjunction
$\mtx s_\cF(x)$. It is quite easy to see \cite[Rem.5.5]{Tf} that the map $\mtx s_\cF$ is a preradical---namely, in every module $M$, the union defined by the disjunction in question is in fact a sum, and, moreover, it forms a submodule of $M$. The interesting new fact is that it always is
a radical \cite{MaRo}. And so we have two new classes, the \emph{$\mtx s_\cF$-torsion} modules and the \emph{$\mtx s_\cF$-torsionfree} modules, axiomatized, respectively, by the implications
$$x\eq x\to \mtx s_\cF(x)$$
and 
$$\mtx s_\cF(x)\to x\eq 0.$$ 

For the same reason as in injective torsion, also the class of $\mtx s_\cF$-torsionfree modules is elementary.

Note, $\mtx s=\mtx s_{_RR}=\mtx s_\cF$ for $\cF= {_R\flat}$, the class of flat left $R$-modules (the latter equality follows from the axiomatization of flat modules  above).

\subsection{Classical divisibility} Let $R$ be a domain. The elementary dual of the torsionfree axiom $rx\eq0\to x\eq0$ is $x\eq x \to r|x$. Doing this for all nonzero ring elements, we see that the elementary dual of the class of torsionfree left $R$-modules is that of \emph{divisible} right $R$-modules. Similarly we see that the class of divisible left $R$-modules is axiomatized by the set of implications $$x\eq x \to r|x$$ with $0\not= r\in R$.

\subsection{Hattori divisibility}\label{div} Taking the (infinitary) elementary dual of  Hattori's torsionfreeness (or $\mtx h$-torsionfreeness) of \S\ref{Hatt}, one obtains Hattori's definition of divisibility: a module over an arbitrary ring $R$ is called \emph{divisible} if it satisfies the following (in general, infinitary) implication $$(\bigwedge_{sr=0, s\in R} sx\eq 0) \to r|x.$$

Written in plain language, this means that a module is divisible iff an element $x$ is divisible by $r\in R$ whenever the left annihilator of $r$ in $R$ annihilates also $x$ (which is certainly necessary for  $x$ to be divisible by $r$). It is curious to note that Lam in \cite[Def.3.16]{L} makes precisely this definition without quoting Hattori's work and without making the dual definition of torsionfreeness.

\subsection{Absolutely pure modules}\label{abs}
A module is called \emph{absolutely pure} if it is pure in every module that contains it as a submodule. Since direct summands are pure, an \emph{injective} module (i.e., a module that constitutes a direct summand in in every module that contains it) are absolutely pure. (Over noetherian rings, the converse is true, so one may just think of injectives.) It was shown in \cite[Prop.1.3]{PRZ2} (see also \cite[Prop.2.3.3]{P2}) that $M$ is absolutely pure if and only if every pp subgroup is an annihilator, more precisely, iff for every pp formula $\phi$ one has $\phi(M)=\ann_M \D\phi(R_R)$. Note, this makes sense, as $\D\phi$ is a right formula and can thus be applied to the ring as a right module.

It follows from the easy direction of Herzog's criterion Fact \ref{crit} that the inclusion from left to right is always true. Recall from \S \ref{purity} that it suffices to consider $1$-place pp formulas for purity. The same applies to absolute purity. One concludes that the class of absolutely pure modules is axiomatized by  the implications 
$$({\bigwedge_{r\in \D\phi(R_R)} rx\eq 0})\,\,\,\,\,\,\,\,  \to \,\,\,\,\phi(x),$$
one for every $\phi$.

These axioms are dual, under the extended elementary duality of \cite{PRZ1}, to the axioms of flatness for right modules, see the introduction to \S4 in that paper. This extends an earlier result of Herzog \cite{Her}, showing the same for the case that these classes are elementary (which, by a result of Eklof and Sabbagh, is the case precisely when $R$ is left coherent).

\section{Logic}\label{log}
Ever since the beginning of model theory (or universal algebra for that matter)---namely since Birkhoff's preservation theorem---it has been known that the mere syntactic shape of axioms may have sweeping consequences on the structure of its model class.  We illustrate this  by the examples that we have encountered in the previous section. Most of this holds true in the larger context of structures in any algebraic signature.

\subsection{Valid pp implications: Lemma Presta} 

It would be a shame to talk about implications of pp formulas in modules without mentioning a description of those that are
%Before that we describe  when  an implications of pp formulas is 
\emph{valid}, i.e., true in all left $R$-modules---in which case we write $\phi\leq\psi$. This description was found by Mike Prest \cite[\S 8.3]{P} and brought to my attention by Gena Puninski about three decades ago as `Lemma Presta.' I have always found the exact statement difficult to memorize, which is why I wish to present it here---in a way that is easy to  remember.

Lemma Presta, \cite[Lemma 8.10]{P} or  \cite[Cor.1.1.16]{P2},
%was brought to my attention---under this name---by Gena Puninski about three decades ago. I have always found it difficult to memorize, so I wish to present it here in a form that makes it easy to remember. 
% 
% The aim is to 
 describes the pp implications $\phi\to\psi$ that are valid (in $\RMod$), i.e., for which we have $\phi\leq\psi$.  It shows  that for any pp  formulas $\phi$ and $\psi$, one can find a first order sentence in the language of rings whose truth in $R$ is equivalent to $\phi\leq\psi$. 
 Here is how. Let $\phi$  and $\psi$ be written in matrix form as $A\di B\br x$ and $C\di D\br x$, respectively. 
 
%I have found it difficult to remember the exact statement, so I wish to present it here in a form that makes it easy to memorize.  

Write the matrices $A, B, C$  with another three unknown matrices $X, Y, Z$ as \\ 
$X, \hspace{2mm} A, \hspace{2mm}  B, \hspace{2mm}  C, \hspace{2mm}  Y, \hspace{2mm}  Z$ \hspace{1mm}  and form the  products \hspace{1mm} $XA, \hspace{2mm} XB, \hspace{2mm} CY, \hspace{2mm} CZ$. Assuming that all the matrices match appropriately, state  that the outer two products are equal, $XA\eq CZ$, and that $D$ is equal to the sum of the inner two,  $XB + CY \eq D$.  It is an easy exercise in formalization to write the statement that is intended to  mean $\exists\, X\,  Y\,  Z\, (XA\eq CZ\,\wedge\, XB + CY \eq D)$
as a first order statement in the theory of rings. Let us prove that this is the desired sentence.
 
% \hspace{1cm}  $X \hspace{1cm} A \hspace{1cm} B \hspace{1cm} C \hspace{1cm} Y \hspace{1cm} Z$. Then form the products \hspace{1cm}  $XA, \hspace{1cm} XB, \hspace{1cm} CY, \hspace{1cm} and CZ$. 
% 
%Moreover, it shows that $\phi\leq\psi$ is equivalent to the truth of a particular first-order sentence of rings in $R$. More precisely, consider a pp implication $\phi\to\psi$ and write $\phi\leq\psi$ to indicate that it is . If it is valid
%
%???that for every
\begin{thm} [Lemma Presta]\text{} 

 $A\di B\br x\,\leq\,C\di D\br x$ if and only if $R\models \exists\, X\,  Y\,  Z\,  (XA\eq CZ\,\wedge\, XB + CY \eq D)$.
\end{thm}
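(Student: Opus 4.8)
The plan is to prove the two directions separately. The right-to-left implication is essentially a one-line substitution, whereas for the left-to-right implication I would pass to a \emph{generic} module realizing the antecedent and then read the required matrices off from a witness for the consequent. So, for the easy direction, suppose $R\models\exists\,X\,Y\,Z\,(XA\eq CZ\,\wedge\,XB+CY\eq D)$ and fix matrices $X,Y,Z$ over $R$ witnessing this, of the (unique) sizes that make all four products $XA,CZ,XB,CY$ defined. Let $M$ be any left $R$-module and $\br m$ a tuple with $\br m\in(A\di B\br x)(M)$, say $B\br m=A\br y$ for some tuple $\br y$ in $M$. Then, using associativity of the matrix action and its additivity in the matrix argument,
$$D\br m=(XB+CY)\br m=X(B\br m)+C(Y\br m)=X(A\br y)+C(Y\br m)=(CZ)\br y+C(Y\br m)=C(Z\br y+Y\br m),$$
so $\br m\in(C\di D\br x)(M)$ with witnessing tuple $Z\br y+Y\br m$. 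Since $M$ was arbitrary, $A\di B\br x\,\leq\,C\di D\br x$.

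For the converse I would build a module in which $\phi:=A\di B\br x$ is realized as freely as possible. Let $F$ be the free left $R$-module on a basis split into two tuples $\br e,\br f$, with $\br e$ of the length of the free variables $\br x$ and $\br f$ of the length of the existentially quantified tuple of $\phi$; let $K\leq F$ be the submodule generated by the entries of $B\br e-A\br f$; and set $M:=F/K$, $\br a:=\br e+K$, $\br b:=\br f+K$. By construction $B\br a=A\br b$ in $M$, so $\br a\in\phi(M)$; hence, by the hypothesis $\phi\leq\psi:=C\di D\br x$, also $\br a\in\psi(M)$, say $D\br a=C\br w$ for some tuple $\br w$ in $M$. (In fact $(M,\br a)$ is a free realization of $\phi$ in the sense of \S\ref{puregen}, but only the explicit presentation $M=F/K$ is used below.)

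It remains to extract the matrices, which is the one step needing some care. Lift $\br w$ entrywise to a tuple $\br w_1$ in $F$ and, using that $F$ is free on $\br e,\br f$, write $\br w_1=Y\br e+Z\br f$ with $Y,Z$ matrices over $R$. Since $D\br a=C\br w$ in $M=F/K$, every entry of $D\br e-CY\br e-CZ\br f$ lies in $K$, so — $K$ being generated by the entries of $B\br e-A\br f$ — there is a matrix $X$ over $R$ with $D\br e-CY\br e-CZ\br f=X(B\br e-A\br f)$. Comparing, by freeness of $F$, the coefficients of the basis elements occurring in $\br e$ with those occurring in $\br f$ yields $D-CY=XB$ and $CZ=XA$, i.e.\ $XA\eq CZ$ and $XB+CY\eq D$ hold in $R$. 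The only thing to watch here is the bookkeeping of matrix shapes and of the left/right conventions for the module action; I expect no genuine obstacle, as freeness of $F$ does the work. Finally, as already observed, the displayed existential statement about $R$ is equivalent to a first-order sentence in the language of rings, which completes the proof.
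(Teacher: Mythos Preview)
Your proof is correct and follows essentially the same approach as the paper's: the easy direction is the same computation (the paper phrases it as a chain of lattice inequalities $A\di B\br x\leq XA\di XB\br x\sim CZ\di XB\br x\leq C\di XB\br x$, you unfold it elementwise), and for the hard direction both arguments build the module presented by generators and the single relation $B\br e=A\br f$, apply the hypothesis to the generic satisfier of $\phi$, expand the resulting witness for $\psi$ in the generators, and read off $X,Y,Z$ by comparing coefficients via the universal property of the presentation. The only cosmetic difference is that the paper names the generators directly and swaps the roles of $\br a,\br b$ relative to your $\br e,\br f$.
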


\begin{proof}
%\begin{proof}

If the right hand side holds, we have 

$A\di B\br x\,\leq\,XA\di XB\,\br x\, \sim CZ\di XB\,\br x\,\leq\,C\di XB\,\br x$, and since $C\di CY\,\br x$ is trivially true, also $A\di B\br x\,\leq\,C\di (XB + CY)\,\br x$, which is the left hand side, as desired.

For the hard direction, consider a module $M$ with generators $\br a\, \br b$ and relations $A\br a = B\br b$, or 
%%$\genfrac{}{}{0pt}{}{x}{y}$
%\begin{align}
%    (A, -B)& \begin{pmatrix}
%           \br a \\
%           \br b
%         \end{pmatrix}&=\br 0
%  \end{align}
  $\begin{pmatrix}
A,&-B\\
\end{pmatrix} \begin{pmatrix}
           \br a \\
           \br b
         \end{pmatrix}= \br 0$. We will use the standard feature of a module given by generators $\br z$ and relations $G\br z=\br 0$ (where $G$ is a matching matrix over the ring) that any other matching matrix $H$ annihilating $\br z$ must be divisible on the right by $G$, i.e., there must be a matching matrix $X$ with $H=XG$.

         Note that $\br b$ satisfies $A\di B\br x$ in $M$, hence, by hypothesis, also $C\di D\br x$. Pick a witness $\br c$ in $M$ such that $C\br c = D\br b$. Now, being in $M$ allows us to write $\br c$ as $Z\br a + Y\br b$, so $CZ\br a + CY\br b= D\br b$, hence 
$\begin{pmatrix}
CZ,&CY-D\\
\end{pmatrix}  \begin{pmatrix}
           \br a \\
           \br b
         \end{pmatrix}=\br 0$. The standard feature mentioned before yields a matrix $X$ such that $\begin{pmatrix}
CZ,&CY-D\\
\end{pmatrix} = X \begin{pmatrix}
A,&-B\\
\end{pmatrix} $. Consequently, $XA = CZ$ and $XB + CY = D$, as desired.
   \end{proof}
   
This result is an essential ingredient in the development of elementary duality, cf.\ \cite[Thm.8.21]{P}, \cite[Prop.1.3.1]{P2}, or \cite[Prop.1.10]{habil}. See \cite[Thm.39]{Her2} for Lemma Presta viewed as a completeness theorem.

\subsection{Symmetric sentences} 
The following is easily  verified using the very definition of purity (that pp formulas pass up and down in extensions).

\begin{fact}
 Symmetric sentences are preserved in pure substructures. 
\end{fact}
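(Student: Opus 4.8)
The plan is to reduce everything to the defining property of a pure substructure, together with the observation that pp formulas, being existential, pass up to any extension. So fix a symmetric sentence $\bigwedge\Phi\to\sum\Psi$ (with $\Phi,\Psi$ sets of pp formulas in the common free variables $\br x$ and $\Psi$ closed under finite sums) and a pure substructure $A\seq B$ with $B\models\bigwedge\Phi\to\sum\Psi$; let $\br a$ be a tuple from $A$ of matching length with $A\models\bigwedge\Phi(\br a)$, i.e. $\br a\in\phi(A)$ for every $\phi\in\Phi$. First I would push this up: since each $\phi$ is existential, $\phi(A)\seq\phi(B)$, so $B\models\bigwedge\Phi(\br a)$ --- a step that uses only that $A$ is a substructure, not purity.

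Next I would feed this into the hypothesis to obtain $B\models\sum\Psi(\br a)$. Because $\Psi$ is closed under finite sums, the subgroup $\sum_{\psi\in\Psi}\psi(B)$ coincides with the union $\bigcup_{\psi\in\Psi}\psi(B)$, so there is a single $\psi\in\Psi$ with $\br a\in\psi(B)$. Now I would pull this back down using purity: $\psi(A)=A\cap\psi(B)$, and the coordinates of $\br a$ lie in $A$, whence $\br a\in\psi(A)\seq\sum_{\psi'\in\Psi}\psi'(A)$, i.e. $A\models\sum\Psi(\br a)$. Since $\br a$ was arbitrary, $A\models\bigwedge\Phi\to\sum\Psi$.

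I do not expect a real obstacle here; two points are worth keeping straight. Purity is invoked only in the downward direction --- to retrieve the witnessing $\psi$ from $B$ inside $A$ --- while the antecedent $\Phi$ transfers upward for free. And the reduction of ``$\br a$ satisfies $\sum\Psi$'' to ``$\br a$ satisfies some $\psi\in\Psi$'' is precisely what the closure of $\Psi$ under finite sums provides, so that a single application of the purity equation $\psi(A)=A\cap\psi(B)$ finishes the argument. (In fact the same reasoning yields preservation of $\bigwedge\Phi\to\bigvee\Psi$ in pure substructures, since here $\bigvee\Psi$ and $\sum\Psi$ receive the same interpretation.)
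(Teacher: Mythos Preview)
Your proof is correct and is precisely the argument the paper has in mind: it states the fact without proof, remarking only that it ``is easily verified using the very definition of purity (that pp formulas pass up and down in extensions),'' and your write-up spells out exactly that---pushing the antecedent up by existentiality and pulling the consequent down via $\psi(A)=A\cap\psi(B)$.
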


\subsection{A-sentences} Symmetric sentences in which the the consequent consists of a single pp formula we called A-sentences and saw, in Fact \ref{A}, that A-sentences are preserved in direct products. (We are back to infinitary implications: $\Phi$ is allowed to be an infinite set of pp formulas.) 

Remembering the axiomatizations of divisibility (\S \ref{div}) and absolute purity (\S \ref{abs}) by (possibly infinitary) A-sentences, one infers that the classes of divisible modules and that of absolutely pure modules are closed under direct products. This implies the same  for the class of injectives modules, for `injective = pure-injective + absolutely pure' and direct products of pure-injective modules are pure-injective.

The question arises how much more can be said: can one extend this to arbitrary symmetric sentences? The answer is `no' by a variant of McKinsey's lemma, see \cite[Cor.9.1.7]{H}, whose straightforward proof works just as well for pp formulas.

\begin{fact}[McKinsey's lemma]\label{McK} Let $\Phi$ and $\Psi$ be sets of pp formulas.

If $\bigwedge \Phi\to \bigvee \Psi$ is preserved in direct products, then there is a single $\psi\in\Psi$ such that $\bigwedge \Phi\to \bigvee \Psi$ is equivalent to $\bigwedge \Phi\to \psi$. 

In particular, symmetric sentences closed under product are A-sentences.
 \end{fact}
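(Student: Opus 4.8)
The plan is to prove the contrapositive-flavored statement by a compactness/diagonal argument in the style of McKinsey's original lemma for Horn formulas. Assume $\bigwedge\Phi\to\bigvee\Psi$ is preserved in direct products, and suppose toward a contradiction that for every $\psi\in\Psi$ the implication $\bigwedge\Phi\to\psi$ fails in some left $R$-module. Then for each $\psi\in\Psi$ there is a module $M_\psi$ and a tuple $\br a_\psi$ (of the common arity) with $\br a_\psi\in\bigwedge\Phi(M_\psi)$ but $\br a_\psi\notin\psi(M_\psi)$. Form the direct product $M=\prod_{\psi\in\Psi}M_\psi$ and the tuple $\br a$ whose $\psi$-coordinate is $\br a_\psi$.

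The key step is to check the two halves of the preservation hypothesis against this $\br a$. First, since each $\phi\in\Phi$ is pp, hence preserved in direct products (this is exactly the property recorded just before Fact \ref{A}, or Fact \ref{Horn} applied coordinatewise), from $\br a_\psi\in\phi(M_\psi)$ for all $\psi$ we get $\br a\in\phi(M)$; as this holds for every $\phi\in\Phi$, the antecedent $\bigwedge\Phi$ holds of $\br a$ in $M$. By the preservation hypothesis, the consequent $\bigvee\Psi$ must then hold of $\br a$ in $M$, so there is some $\psi_0\in\Psi$ with $\br a\in\psi_0(M)$. But pp formulas are also preserved in direct factors (the projection $M\to M_{\psi_0}$ is a homomorphism, and pp formulas are preserved by homomorphisms), so projecting to the $\psi_0$-coordinate gives $\br a_{\psi_0}=\br a(\psi_0)\in\psi_0(M_{\psi_0})$, contradicting the choice of $\br a_{\psi_0}$. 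Hence some $\psi\in\Psi$ satisfies $\bigwedge\Phi\to\psi$ in all modules, i.e.\ $\bigwedge\Phi\leq\psi$; combined with the trivially valid $\psi\to\bigvee\Psi$ this yields the claimed equivalence of $\bigwedge\Phi\to\bigvee\Psi$ with $\bigwedge\Phi\to\psi$.

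The main obstacle, and the only point requiring care, is that $\Psi$ may be infinite, so the product $\prod_{\psi\in\Psi}M_\psi$ is an infinite product; one must be sure that the preservation statement is being applied to an infinite-index direct product (which is fine, since "preserved in direct products" in the paper's usage allows arbitrary index sets) and that pp formulas genuinely pass through such infinite products and their coordinate projections — both of which are immediate from their being existentially quantified conjunctions of linear equations, so the relevant finite systems of equations hold componentwise iff they hold in the product, and the existential witnesses can be chosen componentwise. For the final sentence of the statement ("symmetric sentences closed under product are A-sentences"), one simply notes that a symmetric sentence is of the form $\bigwedge\Phi\to\sum\Psi$ with $\Psi$ closed under finite sums, that $\sum\Psi$ is the same as $\bigvee\Psi$ read in modules, and that the $\psi$ produced above is then itself a single pp formula, so $\bigwedge\Phi\to\psi$ is by definition an A-sentence equivalent to the original symmetric sentence.
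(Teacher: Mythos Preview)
Your argument has the right architecture but contains a genuine gap in how you set up the counterexample modules and then invoke the preservation hypothesis. You assume, toward a contradiction, that for every $\psi\in\Psi$ the implication $\bigwedge\Phi\to\psi$ fails in \emph{some} module $M_\psi$, and you then want to conclude that the product $M=\prod_\psi M_\psi$ satisfies $\bigwedge\Phi\to\bigvee\Psi$. But the preservation hypothesis only tells you this when each factor $M_\psi$ already satisfies $\bigwedge\Phi\to\bigvee\Psi$; nothing in your choice of $M_\psi$ guarantees that. Indeed, the conclusion you actually draw---that $\bigwedge\Phi\leq\psi$ for some $\psi$, i.e., that $\bigwedge\Phi\to\psi$ holds in \emph{all} modules---is in general false: over $\Bbb Z$, take $\Phi=\{x\eq x\}$ and $\Psi=\{2\,|\,x\}$; the resulting sentence is preserved in products (a product of $2$-divisible groups is $2$-divisible), yet $2\,|\,x$ is not valid in every abelian group. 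So the step ``By the preservation hypothesis, the consequent $\bigvee\Psi$ must then hold of $\br a$ in $M$'' is unjustified as written.

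The fix is exactly McKinsey's original move (as in \cite[Cor.\,9.1.7]{H}, which is what the paper defers to): negate \emph{equivalence}, not validity. If $\bigwedge\Phi\to\bigvee\Psi$ is not equivalent to $\bigwedge\Phi\to\psi$, then---since the latter trivially implies the former---there is a module $M_\psi$ that satisfies $\bigwedge\Phi\to\bigvee\Psi$ but not $\bigwedge\Phi\to\psi$; choose the witnessing tuple $\br a_\psi$ there. Now every factor is a model of the original sentence, so preservation in products applies and $M=\prod_\psi M_\psi$ satisfies $\bigwedge\Phi\to\bigvee\Psi$; from this point your projection argument goes through verbatim and yields the desired contradiction. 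Your closing remarks on symmetric sentences and the passage from $\sum\Psi$ to $\bigvee\Psi$ are fine.
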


\subsection{Pp implications: definable subcategories}\label{defsub} Consider an A-sentence as above. If $\Phi$ is finite (or, equivalently, a singleton\footnote{Finite conjunctions of pp formulas are pp.}), we can write it as a \emph{pp implication}, i.e., an implication of the form $\phi\to\psi$ with both $\phi$ and $\psi$ pp formulas. Having characterized, in Lemma Presta, the pp implications that are always true, we go on to describe (classes axiomatized by) arbitrary pp implications.

Such  sentences are elementary (finitary first-order), and therefore classes of models of pp implications are elementary classes. Crawley-Boevey called such classes, or rather the corresponding full subcategories of modules,  \emph{definable subcategories} (of $\RMod$).\footnote{For a discussion of the history of (and the names for) that concept, see \cite[\S 1.1]{HR}.} Since trivially pp implications are  A-sentences, definable subcategories  are elementary classes closed under direct products.

The largest definable subcategory of $\RMod$ is $\RMod$ itself. This is, because it even is  an \emph{equational class} (or \emph{variety} in Birkhoff's terminology of universal algebra), i.e., a class defined by sentences of the form $\exists \br x \alpha$ with $\alpha=\alpha(\br x)$ a term equation (hence an atomic formula). Note, such a sentence is equivalent to the implication $\br x\eq\br x\to \alpha$, clearly a pp implication.

There is a preservation theorem for definable subcategories: a class of modules constitutes a definable subcategory if and only if it is closed under direct products, direct limits and pure substructures, \cite{purity} or \cite[Thm.3.4.7]{P2}. From this it follows that definable subcategories are exactly the (full subcategories on) \emph{additive elementary classes}, i.e., elementary classes closed under finite direct sum and direct summand, see \cite[\S 1.1]{HR} again.

Plain inspection of the axioms shows that among the examples discussed in the algebra section above, the class of torsionfree modules over a domain and the classes of $\mtx s_\cF$-torsionfree modules (with $\mtx s_\cF$ pp torsion)   constitute definable subcategories. In particular, the $\mtx s$-torsionfree modules (for injective torsion $\mtx s$) do.

\subsection{F-sentences: Pure Serre subcategories}\label{F}\text{}
Following \cite[\S4.2]{PRZ1}, an \emph{F-sentence} is a symmetric sentence with antecedent a singleton, i.e., an implication of the form $\phi \to \sum\Psi$ with $\phi$ pp and $\Psi$ a set of pp formulas. An \emph{F-class} is a class of structures axiomatized by F-sentences.

%Most of the torsionfree ????

A class $K$ is said to be a \emph{pure Serre subcategory} if, in every pure-exact sequence, the outer terms belong to $K$ if and only if  the middle term does. In other words, pure Serre subcategories are closed under pure submodules, pure epimorphic images and pure extensions. 

It is easy to see that F-sentences are preserved in pure submodules and pure epimorphic images. For instance, given a pure epimorphism $g: B \to C$ such that $B\models \phi \to \bigvee\Psi$, if $\br c\in \phi(C)$, then, by purity, there's a preimage $\br b\in\phi(B)$, which then must satisfy also some $\psi\in\Psi$; finally, any homomorphism preserves pp formulas, hence $\br c\in \psi(C)$, which concludes the proof that $C\models \phi \to \bigvee\Psi$. 

The converse is also true:

\begin{fact}\label{Serre} \cite[Lemma 2.2]{HR}  F-classes form pure Serre subcategories. 
\end{fact}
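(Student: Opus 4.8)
The plan is to supply the one direction not yet covered in the text. Since it has already been observed that F-sentences are preserved in pure submodules and in pure epimorphic images, the outer terms of a pure-exact sequence lie in an F-class $K$ whenever the middle term does. So the only thing left is to show that F-classes are closed under pure extensions; this is the whole content of the converse inclusion.

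Concretely, fix a pure-exact sequence $0\to A\xrightarrow{f}B\xrightarrow{g}C\to 0$ with $A,C\in K$, and let $\phi\to\sum\Psi$ be an F-sentence holding throughout $K$; I must show $B\models\phi\to\sum\Psi$, i.e., that every $\br b\in\phi(B)$ satisfies $\sum\Psi$ in $B$ --- equivalently, lies in $\psi(B)$ for some $\psi\in\Psi$. Since $g$ preserves pp formulas, $g(\br b)\in\phi(C)$, and as $C\models\phi\to\sum\Psi$ there is $\psi_1\in\Psi$ with $g(\br b)\in\psi_1(C)$; hence $g(\br b)\in(\psi_1\wedge\phi)(C)$, and $\psi_1\wedge\phi$ is again pp.

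The crux is a correction-term argument. Because $g$ is a pure epimorphism, $g(\br b)\in(\psi_1\wedge\phi)(C)$ lifts to some $\br b'\in(\psi_1\wedge\phi)(B)$ with $g(\br b')=g(\br b)$. Then $\br b$ and $\br b'$ both lie in the subgroup $\phi(B)$, so $\br a:=\br b-\br b'\in\phi(B)$, while also $\br a\in\ker g=\im f$. Purity of $f$ --- the equality $A\cap\phi(B)=\phi(A)$ --- then places $\br a$, under the identification of $A$ with $\im f$, inside $\phi(A)$. Applying $A\models\phi\to\sum\Psi$ yields $\psi_2\in\Psi$ with $\br a\in\psi_2(A)\seq\psi_2(B)$, and therefore $\br b=\br b'+\br a\in\psi_1(B)+\psi_2(B)=(\psi_1+\psi_2)(B)$. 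Since $\Psi$ is closed under finite sums, $\psi_1+\psi_2$ is equivalent to a member of $\Psi$, so $\br b$ satisfies $\sum\Psi$ in $B$. As $\br b$ and the F-sentence were arbitrary, $B\in K$.

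The delicate point --- and the reason F-sentences are taken in exactly this prepared shape --- is twofold. First, the lift $\br b'$ must be chosen inside $\phi$ (hence the passage to $\psi_1\wedge\phi$): only then does the correction term $\br b-\br b'$ remain in $\phi(B)$ and, by purity of $f$, descend to $\phi(A)$, where the F-sentence for $A$ can be applied to it. Second, the formula $\psi_1$ obtained from $C$ and the formula $\psi_2$ obtained from $A$ need not coincide, so closure of $\Psi$ under finite sums is genuinely used at the last step. This also explains why a proper conjunctive antecedent $\bigwedge\Phi$ would break the argument: the single correction term would then have to satisfy every $\phi\in\Phi$ simultaneously, which one lift cannot guarantee.
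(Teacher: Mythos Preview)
Your proof is correct and is precisely the argument the paper has in mind: it sketches only the pure-submodule and pure-epimorphic-image directions in the text and then, for the pure-extension direction, points to \cite[Lemma~2.2]{HR} while emphasizing that this is where one genuinely needs $\Psi$ closed under finite sums---exactly the feature you invoke in the final step $\br b=\br b'+\br a\in(\psi_1+\psi_2)(B)$. Your lift via $\psi_1\wedge\phi$ (rather than just $\psi_1$) to keep the correction term inside $\phi(B)$, and your concluding remark on why an infinite conjunctive antecedent would obstruct the argument, are both apt and match the spirit of the paper's discussion.
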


But to show that F-sentences are preserved in pure extensions, though still straightforward, one needs to make sure that the big disjunction is in fact a sum, i.e., one needs to have the implication in true F-form $\phi \to \sum\Psi$, see the details in the proof of \cite[Lemma 2.2]{HR}. The same applies to the next property of F-classes---here one has to add disjuncts for every coordinate in the (finite!) support of an element in the direct sum.
\begin{fact}\label{sum}\cite[Prop.\ 2.3]{HR} F-classes are closed under direct sum.
\end{fact}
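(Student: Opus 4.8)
The plan is to show directly that if $\phi \to \sum\Psi$ holds in two modules then it holds in their direct sum, and then lift this to arbitrary (finite) direct sums by induction, which suffices since all our modules are closed under finite direct sum already and arbitrary direct sums reduce to finitely-supported elements. So let $M = \bigoplus_{i\in I} M_i$ with each $M_i \models \phi \to \sum\Psi$, and take a tuple $\br a \in \phi(M)$. The crucial point is that $\br a$ has \emph{finite support}: there is a finite $J \subseteq I$ with $\br a \in \bigoplus_{i\in J} M_i$, and since pp formulas are preserved under direct summands (being projections, cf.\ the discussion of Fact \ref{Horn}) and the latter is a direct summand, we may as well assume $I = J$ is finite. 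Now for each $i \in I$ the coordinate $\br a(i)$ lies in $\phi(M_i)$, hence in $\psi_i(M_i)$ for some $\psi_i \in \Psi$.

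The obstacle, exactly as flagged in the paragraph before Fact \ref{sum}, is that the witnessing formula $\psi_i$ depends on the coordinate $i$, so we cannot conclude $\br a \in \psi(M)$ for a single $\psi$. This is precisely where the F-form—$\sum\Psi$ rather than $\bigvee\Psi$—is used. The remedy is to take $\psi := \sum_{i \in I}\psi_i$, which, $\Psi$ being closed under finite sums, again lies in $\Psi$. I claim $\br a \in \psi(M)$. Indeed, write $\br a = \sum_{i\in I}\br a(i)$ where $\br a(i)$ is the image of $\br a$ under the composite $M \twoheadrightarrow M_i \hookrightarrow M$; each inclusion $M_i \hookrightarrow M$ is a homomorphism, so it carries $\br a(i) \in \psi_i(M_i)$ into $\psi_i(M) \subseteq \psi(M)$. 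Since $\psi(M)$ is a subgroup of $M^{|\br x|}$ and $\br a$ is a finite sum of its elements, $\br a \in \psi(M)$, so $M \models \phi \to \sum\Psi$.

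For the general statement about an F-class $K$: given $M = \bigoplus_{i\in I} M_i$ with all $M_i \in K$, every F-sentence axiomatizing $K$ is, by the argument above, satisfied in $M$, so $M \in K$. (Strictly, the argument above handles one F-sentence at a time, which is all that is needed, since $M \models K$ iff $M$ satisfies each axiom.) I expect the only genuinely delicate bookkeeping to be the reduction to finite support and the verification that $\sum_{i\in I}\psi_i \in \Psi$—but the former is immediate from finite support of tuples in a direct sum together with preservation of pp formulas in direct summands, and the latter is just the standing hypothesis that $\Psi$ is closed under finite sums (the very feature that distinguishes symmetric sentences). No compactness or duality is needed; this is a hands-on direct-sum computation of the same flavour as the proof of Fact \ref{Serre} in \cite[Lemma 2.2]{HR}.
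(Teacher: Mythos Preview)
Your argument is correct and follows exactly the approach the paper sketches in the sentence preceding Fact~\ref{sum}: reduce to the finite support of a given tuple, project to each coordinate to obtain a witnessing $\psi_i\in\Psi$, and then exploit closure of $\Psi$ under finite sums to take $\psi=\sum_i\psi_i$ as a single witness. There is nothing to add.
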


In arbitrary direct products the  proof breaks down as one cannot expect to be able to add up infinitely many disjuncts from $\Psi$  to get a single one. In fact,  McKinsey's lemma, Fact \ref{McK}, applies and shows that, under the assumption that the F-sentence is preserved in direct products, a single disjunct must have worked throughout  to begin with: 

\begin{fact}
 If an implication $\phi \to \bigvee\Psi$ is preserved in direct products, then there is a single $\psi\in\Psi$ such that $\phi \to \bigvee\Psi$ is equivalent to $\phi \to \psi$. 
 
 In particular, F-sentences preserved in direct products are pp implications. Hence F-classes closed under direct product are definable subcategories.
\end{fact}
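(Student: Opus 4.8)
The plan is to derive this directly from McKinsey's lemma, Fact~\ref{McK}, which is already stated in full strength for arbitrary sets $\Phi$ of pp formulas in the antecedent. An F-sentence $\phi\to\sum\Psi$ is, up to rewriting the sum $\sum\Psi$ as the disjunction $\bigvee\Psi$ (these define the same subgroup in every module, since a sum of subgroups is the union of the subgroups among the set of all finite partial sums, and $\Psi$ may be assumed closed under finite sums), simply a symmetric sentence with the singleton antecedent $\Phi=\{\phi\}$; in particular it is of the form $\bigwedge\Phi\to\bigvee\Psi$ to which Fact~\ref{McK} applies verbatim.

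First I would note that if $\phi\to\bigvee\Psi$ is preserved in direct products, then so is the equivalent $\bigwedge\{\phi\}\to\bigvee\Psi$, and Fact~\ref{McK} hands us a single $\psi\in\Psi$ with $\bigwedge\{\phi\}\to\bigvee\Psi$ equivalent to $\bigwedge\{\phi\}\to\psi$; that is, $\phi\to\bigvee\Psi$ is equivalent to the pp implication $\phi\to\psi$. This is the first displayed conclusion. For the second sentence, observe that $\phi\to\psi$ with both $\phi,\psi$ pp is, by definition, a pp implication, and that an F-class axiomatized by F-sentences all of which happen to be preserved in direct products is therefore axiomatized by pp implications (replace each axiom by its pp-implication equivalent); hence, by the discussion in \S\ref{defsub}, it is a definable subcategory. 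Conversely any definable subcategory is closed under direct products, so the characterization is tight, though only the forward direction is asserted here.

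The one point that requires a word of care is the passage between $\sum\Psi$ and $\bigvee\Psi$ when invoking Fact~\ref{McK}, which is stated for disjunctions: here one simply replaces $\Psi$ by its closure $\Psi'$ under finite sums, notes that $\bigvee\Psi'$ defines the same subgroup as $\sum\Psi$ in every module (as recalled in \S\ref{purity}, a finite sum of pp-definable subgroups is again pp-definable, so each finite partial sum lies in $\Psi'$), applies Fact~\ref{McK} to get a single $\psi\in\Psi'$, and finally unravels $\psi$ as a finite sum $\psi_1+\dots+\psi_k$ of members of $\Psi$ if one insists on a disjunct literally from $\Psi$; but since $\psi$ is itself pp, the pp implication $\phi\to\psi$ is already in the desired form and no unravelling is strictly needed. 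I do not anticipate a genuine obstacle: this is a short deduction from a fact proved earlier, and the only temptation to resist is re-proving McKinsey's lemma in the F-sentence language rather than citing it.

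\begin{proof}
This is immediate from McKinsey's lemma, Fact~\ref{McK}. Writing $\Psi'$ for the closure of $\Psi$ under finite sums, the disjunction $\bigvee\Psi'$ and the sum $\sum\Psi$ define the same subgroup in every module, so $\phi\to\bigvee\Psi$ is (equivalent to) the symmetric sentence $\bigwedge\{\phi\}\to\bigvee\Psi'$ with singleton antecedent. If it is preserved in direct products, Fact~\ref{McK} yields a single $\psi\in\Psi'$ such that $\bigwedge\{\phi\}\to\bigvee\Psi'$ is equivalent to $\bigwedge\{\phi\}\to\psi$, that is, $\phi\to\bigvee\Psi$ is equivalent to $\phi\to\psi$; and $\psi$, being a finite sum of members of $\Psi$, is pp. Thus $\phi\to\psi$ is a pp implication. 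Applying this to every axiom of an F-class that is closed under direct products, we replace each F-sentence by an equivalent pp implication; by the discussion in \S\ref{defsub} the resulting class is a definable subcategory.
\end{proof}
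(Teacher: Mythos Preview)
Your approach---specialize McKinsey's lemma (Fact~\ref{McK}) to a singleton antecedent---is exactly what the paper intends; the paper offers no more than the lead-in sentence pointing to Fact~\ref{McK}. Two remarks, one cosmetic and one substantive.

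First, the cosmetic one: for the first assertion you do not need $\Psi'$ at all. The implication $\phi\to\bigvee\Psi$ is already literally of the shape $\bigwedge\{\phi\}\to\bigvee\Psi$, so Fact~\ref{McK} hands you $\psi\in\Psi$ directly, as the statement claims. Your detour through $\Psi'$ only yields $\psi\in\Psi'$, which is weaker than what is asserted. Save the passage to $\Psi'$ for the second sentence, where the axiom really is $\phi\to\sum\Psi$ and you need $\bigvee\Psi'=\sum\Psi$ to invoke McKinsey; there your argument is fine and does show the F-sentence is equivalent to a pp implication.

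Second, the substantive one: your argument for the last sentence has a genuine gap. From ``the F-class $K$ is closed under direct products'' you cannot conclude that each individual F-axiom $\sigma$ of $K$ is preserved in direct products. Preservation of $\sigma$ in products means: whenever \emph{arbitrary} modules $M_i$ satisfy $\sigma$, so does $\prod M_i$; but such $M_i$ need not lie in $K$, so product-closure of $K$ says nothing about them. Hence you cannot simply ``replace each F-sentence by an equivalent pp implication.'' The clean way to get the last sentence is to invoke the preservation theorem quoted in \S\ref{defsub}: F-classes are closed under pure substructures (Fact~\ref{Serre}) and under direct limits (F-sentences are h-inductive, as noted in the Introduction), so an F-class that is in addition closed under direct products satisfies all three closure conditions and is therefore a definable subcategory.
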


(This was extended to pure Serre subcategories in  \cite[Prop.\ 2.8]{HR}.)

Examples of F-classes are all classes of torsion modules for the various kinds of torsion discussed in the algebra section (except for possibly Hattori torsion). Further, the classes of Hattori torsionfree modules, of flat modules and the ones from \S\ref{puregen}
are also F-classes.
%Apply now to FLAT, TF, etc....????
%Given the role of definable subcategories in  today's representation theory, it is not surprising that, for all the algebraic concepts and classes discussed above, it is very desirable to know when they actually form definable subcategories, see the cited literature, especially .....???\cite{Tf}.... and  \cite{Bazz}. Get more concrete??? Eklof-Sabbagh and Sabbagh-Eklof???
%
%
%
%\subsection{Others???}
%Don't forget about the dual to McKinsey's: $\bigwedge \Phi\to \bigvee \Psi$ is closed under lim iff there's a single $\phi\in\Phi$ that does it.
%
%Ow, for every $\phi\in\Phi$ there's $X_\phi$ and $\br x_\phi\in\phi(X_\phi)\setminus \bigcup_{\psi\in\Psi}\psi(X_\phi)$.
%
%Lyndon says every formula is equivalent to 
%
%"quantifier conjunction $\bigwedge \Phi\to \bigvee \Psi$" 
%
%with $\Phi$ and $\Psi$ sets of atomic formulas. Think!!!???
%
%When are A-classes definable? (is there a McKinsey style result about these???)
%
%???Look at SILVANA BAZZONI, IVO HERZOG, PAVEL PRIHODA, JAN SAROCH, AND JAN TRLIFAJ
%PURE PROJECTIVE TILTING MODULES

\end{document}